\newtheorem{prethm}{{\bf Theorem}}
\newenvironment{thm}{\begin{prethm}{\hspace{-0.5
               em}{\bf}}}{\end{prethm}}
\newtheorem{prepro}{{\bf Theorem}}
\newenvironment{pro}{\begin{prepro}{\hspace{-0.5
               em}{\bf}}}{\end{prepro}}
\newtheorem{preprop}{{\bf Proposition}}
\newtheorem{precor}{{\bf Corollary}}
\newenvironment{cor}{\begin{precor}{\hspace{-0.5
               em}{\bf}}}{\end{precor}}
\newtheorem{preconj}{{\bf Conjecture}}
\newenvironment{conj}{\begin{preconj}{\hspace{-0.5
               em}{\bf}}}{\end{preconj}}
\newtheorem{predefi}{{\bf Definition}}
\newenvironment{defi}{\begin{predefi}{\hspace{-0.5
               em}{\bf}}}{\end{predefi}}
\newtheorem{preali}{{\bf Proof of Theorem 1.}}
\newenvironment{ali}[1]{\begin{preali}{\rm
               #1}\hfill{$\Box$}}{\end{preali}}
\newtheorem{preque}{{\bf Question}}
\newenvironment{que}{\begin{preque}{\hspace{-0.5
               em}{\bf.}}}{\end{preque}}
\newtheorem{preremark}{{\bf Remark}}
\newtheorem{preexample}{{\bf Example}}
\newtheorem{prelem}{{\bf Lemma}}
\newenvironment{lem}{\begin{prelem}{\hspace{-0.5
               em}{\bf}}}{\end{prelem}}
\newtheorem{prelam}{{\bf Lemma}}
\newenvironment{lam}{\begin{prelam}{\hspace{-0.5
               em}{\bf}}}{\end{prelam}}
\newtheorem{preproof}{{\bf Proof}}
\newenvironment{proof}[1]{\begin{preproof}{\rm
               #1}\hfill{$\Box$}}{\end{preproof}}
\title{\bf \large Upper bounds for the 2-hued chromatic number of graphs in terms of the independence number}
\author{{\normalsize {\sc
Ali Dehghan}\, and\, {\sc Arash
Ahadi}}\vspace{3mm}
\\{\footnotesize{\it Department of Mathematical
 Sciences, Sharif
University of Technology.}}
\\{\footnotesize{}}\\{\footnotesize
{$\mathsf{a\_dehghan@mehr.sharif.edu}$\quad\quad
$\mathsf{arash\_ahadi@mehr.sharif.edu}$}}}
\date{}
\begin{document}
\maketitle

\begin{abstract}
{\small \noindent A 2-hued coloring of a graph $G$ (also known as conditional $(k, 2)$-coloring and dynamic coloring) is a coloring such that for every vertex
$v\in V(G)$ of degree at least $2$, the neighbors of $v$ receive at least $2$ colors. The smallest
integer $k$ such that $G$ has a 2-hued coloring with $ k  $ colors, is called the
{\it 2-hued chromatic number} of $G$ and denoted by $\chi_2(G)$. In this paper, we will show that if $G$ is a regular graph, then
$ \chi_{2}(G)- \chi(G) \leq 2 \log _{2}(\alpha(G)) +\mathcal{O}(1)  $ and if $G$ is a graph and $\delta(G)\geq 2$, then
$ \chi_{2}(G)- \chi(G) \leq  1+\lceil  \sqrt[\delta -1]{4\Delta^{2}}  \rceil ( 1+ \log _{\frac{2\Delta(G)}{2\Delta(G)-\delta(G)}} (\alpha(G)) ) $ and in general case if $G$ is a graph, then $  \chi_{2}(G)- \chi(G) \leq 2+ \min \lbrace \alpha^{\prime}(G),\frac{\alpha(G)+\omega(G)}{2}\rbrace $.

}

\begin{flushleft}
\noindent {\bf Key words:} Dynamic chromatic number; conditional $(k, 2)$-coloring;
2-hued chromatic number; 2-hued coloring; Independence number; Probabilistic method.

\noindent {\bf Subject classification: 05C15, 05D40.}
\end{flushleft}

\end{abstract}


\section{Introduction}

All graphs in this paper are finite, undirected and simple. We follow the notation and
terminology of \cite{MR1367739}. A {\it vertex coloring} of $G$ is a map $c: V(G) \mapsto \{1,
2, \cdots, k\}$ such that for any adjacent vertices $u$ and $v$ of
$G$, $c(u) \neq c(v)$. As $|c(V(G))| \le k$, $c$ is also called a
$k$-coloring of $G$. We denote a bipartite graph
$G$ with bipartition $(X,Y)$ by $G[X,Y]$. Let $G$ be a graph with a vertex coloring $c$. For every $v\in V (G)$, we denote the degree
of $v$ in $G$, the neighbor set of $v$ and the color of $v$ by $d(v)$, $N(v)$, and $c(v)$, respectively. For any $S\subseteq V(G)$, $N(S)$ denote the set of vertices of $G$, such that each of them has at least one neighbor in $S$.
There are many ways to color the vertices of graphs, an interesting way for vertex coloring
was recently introduced by Lai et al. in \cite{MR1991048}. A vertex $k$-coloring of a graph $G$ is called
{\it 2-hued} if for every vertex $v$ with degree at least $2$, the
neighbors of $v$ receive at least two different colors. The smallest
integer $k$ such that $G$ has a 2-hued $k$-coloring is called the
{\it 2-hued chromatic number} of $G$ and denoted by $\chi_2(G)$.

There exists a generalization for the 2-hued coloring of graphs \cite{MR2251583, Mont}. For an integer $r > 0$, an {\it r-hued k-coloring} of a graph $G$ is a $k$-coloring of the vertices of $G$ such that every vertex $v$ of degree $d(v)$ in $G$ is
adjacent to vertices with at least $\min \lbrace r, d(v) \rbrace$ different colors. The smallest
integer $k$ for which a graph $G$ has an r-hued k-coloring is called the
{\it r-hued k-coloring chromatic number}, denoted by $\chi_r(G)$. An r-hued k-coloring is a generalization of the traditional vertex coloring for which $r = 1$.
The other concept that has a relationship with the 2-hued coloring is the hypergraph coloring. A hypergraph $ H $, is a pair $ (X,Y) $, where $ X $ is the set of vertices and $ Y $ is a set of non-empty subsets of $ X $, called edges. The coloring of $ H $ is a coloring of $ X $ such that for every edge $ e $ with $ \vert e \vert >1 $, there exist $ v,u\in X $ such that $ c(u)\neq c(v) $. For the hypergraph $ H=(X,Y) $, consider the bipartite graph $ \widehat{H} $ with two parts $ X $ and $ Y $, that $ v\in X $ is adjacent to $ e\in Y $ if and only if $ v\in e $ in $ H $. Now consider a 2-hued coloring $ c $ of $ \widehat{H} $, clearly by inducing $ c $ on $ X $, we obtain a coloring of $ H $.
The graph $G^{\frac{1}{2}}$ is said to be the $2$-subdivision of a graph
$G$ if $G^{\frac{1}{2}}$ is obtained from $G$ by replacing each edge with a path with exactly one inner
vertices \cite{MR2519165}. There exists a relationship between $ \chi(G)$ and $ \chi_{2}(G^{\frac{1}{2}}) $. We have
$\chi(G) \leq  \chi_{2}(G^{\frac{1}{2}}) $ and $  \chi(G^{\frac{1}{2}})=2 $. For example it was shown
in \cite{Mont} if $ G\cong K_{n} $ then $ \chi_{2}(K_{n}^{\frac{1}{2}}) \geq n $.
Therefore, there are some graphs such that the difference between the chromatic number and the 2-hued chromatic number
can be arbitrarily large. It seems that when $ \Delta (G) $ is close to $ \delta(G) $, then $ \chi_{2}(G) $ is also close to $ \chi(G) $. Montgomery conjectured that for regular graphs the
difference is at most $2$.

\begin{conj}
\noindent {\bf [Montgomery \rm \cite{Mont}\bf]} For any $r$-regular graph $G$, $\chi_2(G)-\chi(G)\leq 2$.
\end{conj}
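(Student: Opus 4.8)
The plan is to repair an optimal proper coloring rather than to build a dynamic coloring from scratch. Fix a proper coloring $c$ of $G$ with $\chi(G)$ colors, and call a vertex $v$ \emph{bad} if all of its neighbors receive one and the same color; since $G$ is $r$-regular with $r\ge 2$, every vertex has degree at least $2$, so $c$ is dynamic exactly when no vertex is bad. Two facts would drive the argument. First, a \emph{monotonicity} principle: recoloring an independent set $S$ with one brand-new color keeps the coloring proper (as $S$ is independent and the color is fresh) and turns no good vertex bad — a good vertex $w$ with a neighbor in $S$ must also have a neighbor outside $S$ (otherwise all of $N(w)$ lies in one color class and $w$ was already bad), so $w$ still sees the fresh color together with a retained color. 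Second, a \emph{structural} fact: for a fixed color $j$, the set $B_j$ of bad vertices whose neighborhoods are entirely colored $j$ is independent — if $u,v\in B_j$ were adjacent then $v\in N(u)$ forces $c(v)=j$ and symmetrically $c(u)=j$, contradicting properness — so $|B_j|\le\alpha(G)$, while the color class $c^{-1}(j)$ is itself independent and may be recolored freely.

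Using these, I would repair the classes $B_j$ by successive splittings. To fix the vertices of $B_j$ it suffices to recolor a subset $S\subseteq c^{-1}(j)$ with a fresh color so that every $v\in B_j$ has both a recolored neighbor and an unrecolored one still colored $j$; since $c^{-1}(j)$ is independent, a single fresh color introduces no internal conflict. Choosing such an $S$ is exactly a \emph{hypergraph $2$-coloring} problem: view each $N(v)$, $v\in B_j$, as a hyperedge on the ground set $c^{-1}(j)$ and ask for a $2$-partition in which no hyperedge is monochromatic. When such a splitting exists it costs one extra color, but it need not exist, so I would iterate: after one balanced split the fixed vertices are discarded and those still bad (all neighbors on one side) are pushed to a fresh color and re-split. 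A counting bound — either a random balanced split or the Lovász Local Lemma applied to the size-$r$ hyperedges — guarantees that a constant fraction of the remaining bad vertices is eliminated per round, and since $|B_j|\le\alpha(G)$, after $O(\log_2\alpha(G))$ rounds $B_j$ is empty. Spending two new colors per round and reusing them across rounds and across the classes $j$ (monotonicity ensuring no new defects) would yield the intermediate bound $\chi_2(G)-\chi(G)\le 2\lfloor\log_2\alpha(G)\rfloor+3$, with the additive constant absorbing the boundary cases.

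The main obstacle is compressing this $O(\log\alpha(G))$-round scheme down to the conjectured two extra colors. The difficulty is genuinely global: one round of splitting can only be guaranteed to break a constant fraction of the monochromatic neighborhoods, and the two fresh colors used on different classes $c^{-1}(j)$ interact through the proper-coloring constraint, so one cannot in general recolor, with only two colors, a set that is simultaneously independent, meets $N(v)$ on both sides for every bad $v$, and avoids all cross-class conflicts. Forcing a uniform two-color assignment with all three properties at once appears to require new structural input — for instance an entropy-compression or constraint-satisfaction argument that assigns each candidate vertex one of the two new colors while locally certifying that every bad vertex is repaired and no conflict is created. It is precisely this step that the present techniques cannot carry out, which is why only the weaker bounds are established here and Montgomery's bound of $2$ remains a conjecture.
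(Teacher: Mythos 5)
You were asked to prove Conjecture~1 of the paper, Montgomery's conjecture, and the first thing to say is that the paper itself contains no proof of this statement: it is presented as open, and the paper only establishes weaker bounds such as $\chi_2(G)-\chi(G)\le 2\lfloor\log_2\alpha(G)\rfloor+3$ for regular graphs. Your proposal does not prove it either, and to your credit you say so explicitly in your last paragraph. Judged as a proof of the stated inequality there is therefore a genuine gap, and the gap is the entire theorem: everything you actually argue for is the intermediate logarithmic bound, and the compression from $O(\log\alpha(G))$ extra colors down to two is precisely the step you concede you cannot carry out. Your diagnosis of where the difficulty sits is accurate and consistent with the paper's own position.

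Concerning the intermediate bound you sketch, your route differs from the paper's in ways worth flagging. The paper (Theorems~3 and~4) does not decompose the bad vertices by the color $j$ of their monochromatic neighborhood; it builds a nested sequence of independent dominating sets via Lemma~4, whose greedy construction guarantees that for a regular graph the surviving bad set at least halves each round, and it spends only two colors per round using the deterministic bipartite splitting lemma (Lemma~2, valid for all $r\ge 4$), with Theorem~1 disposing of $r\le 3$. Your per-class scheme has two soft spots. First, the Lov\'asz Local Lemma applied to size-$r$ hyperedges needs roughly $4\cdot 2^{1-r}\cdot r^2\le 1$, which fails for small $r$, so the splitting step is not available in exactly the regime where the paper falls back on Lemma~2. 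Second, your claim that a failed splitting still eliminates a constant fraction of $B_j$ per round is asserted but not justified: when the LLL applies it repairs all of $B_j$ in one round, and when it does not you have given no mechanism forcing any fixed fraction to be repaired; in the paper the constant-fraction guarantee comes from the dominating-set construction, not from the splitting. Finally, reusing the same two fresh colors across different classes $c^{-1}(j)$ can violate properness (adjacent vertices in different old classes may both receive a fresh color); the paper sidesteps this because each round recolors a set that is independent in $G$, not merely within one color class. None of this affects the verdict on the conjecture itself, which remains unproven in both your write-up and the paper.
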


Some properties of 2-hued coloring were studied in \cite{akbari2, product, aa1, aa2, MR2251583}.
In \cite{MR2483491}, it has been proved that the computational complexity of $\chi_2(G)$ for a $3$-regular graph is an {\bf NP}-complete problem.
Furthermore, in \cite{complex2} it is shown that it is  {\bf NP}-complete to determine whether there exists a 2-hued coloring with 3 colors for a claw-free graph
with the maximum degree $3$. In \cite{strongly} it was proved that if $G$ is a strongly regular graph and $G \neq C_{4},C_{5},K_{r,r}$, then
$\chi_2(G)-\chi(G)\leq 1$.
Finding the optimal upper bound for $ \chi_{2}(G)-\chi(G) $ seems to be an intriguing problem. In this paper we will prove various inequalities relating it to other graph parameters.

Here, we state some definitions and lemmas that will be used in the sequel of the paper.

\begin{defi}
Let $c$ be a vertex coloring of a graph $G$, then $B_c=\{v\in V(G)\mid d(v)\geq 2,\, |c(N(v))|=1\}$, also every vertex in $B_c$ is called  a
{\it bad vertex} and every vertex in $V(G)\setminus B_c=A_c$ is called a {\it good vertex}.
\end{defi}

For every graph $G$ define,

\begin{center}
$k^*(G)=
\begin{cases}
   2,      &if\,\,\chi(G)=2\\
   1,     & if\,\,\chi(G) \in \lbrace 3,4,5\rbrace\\
   0,       & otherwise.\
\end{cases}$
\end{center}

For the simplicity we denote   $k^*(G)$  by $k^*$. In \cite{aa3} it was proved that for every graph $G$, there exists a vertex coloring with at most $\chi(G)+2$ colors such that the set of bad vertices is independent.

\begin{pro}\label{Th.A}
{\rm\cite{aa3}}
{Let $G$ be a graph. Then there exists a vertex $(\chi(G)+
k^*)$-coloring of $G$ such that the set of bad vertices of $G$ is
independent.}
\end{pro}

The most important bound
for $\chi_{2}(G)$ is the following theorem:

\begin{pro}\label{Th.B}
{\rm\cite{MR1991048}} For a connected graph $G$ if $\Delta(G)\leq 3$, then $\chi_{2}(G) \leq 4$ unless
$G = C_{5}$, in which case $\chi_{2}(C_{5}) = 5$ and if $\Delta(G) \geq 4$, then $\chi_{2}(G) \leq \Delta(G)+ 1$.
\end{pro}

We will use the probabilistic method to prove Theorem \ref{Th.1}.

\begin{lam}\label{L.A}
\noindent {\bf [The Lovasz Local Lemma \rm \cite{MR1885388}\bf]} Suppose ${A_{1},\ldots ,A_{n}}$ is a set of random events such that for each
 $i$, $Pr(A_{i})\leq p$ and $A_{i}$ is mutually independent of the set of all but at most $d$ other events. If $4pd\leq 1$, then with positive probability, none of the events occur.
\end{lam}

\begin{lam}\label{L.B}
\noindent {\rm \cite{Akbari}} Let $r \geq 4$ be a natural number. Suppose that $G [A,B]$ is a bipartite graph such
that all vertices of Part $A$ have degree $r$ and all vertices of Part $B$ have degree at most $r$. Then
one can color the vertices of Part $B$ with two colors such that every vertex $v$ of Part $A$, with $d(v)\geq 2$ receives at least two colors in its neighbors.
\end{lam}

\begin{lam}\label{L.C}
\noindent {\rm \cite{MR1367739}} A set of vertices in a graph is an independent dominating set if and only if it is a maximal independent set.
\end{lam}

\begin{defi}
Let $G$ be a graph and $T_1, T_2 \subseteq  V(G)$, then $T_{1}$ is a dominating set for $T_{2} $ if and only if, for every vertex $v\in T_2$, not in $T_1$, is joined to at least one vertex of $T_1$.
\end{defi}

\section{Main Results}
\label{}

\begin{thm}\label{Th.1}
\label{t1} If $G$ is a graph and $\delta(G) \geq 2$, then $ \chi_{2}(G)- \chi(G) \leq  \lceil \sqrt[\delta -1]{4\Delta^{2}}  \rceil (\lfloor \log _{\frac{2\Delta(G)}{2\Delta(G)-\delta(G)}} (\alpha(G) )\rfloor +1)+1 $.
\end{thm}

Before proving our main theorem we need to prove some lemmas.

\begin{lem}\label{L.1}
If $G$ is a graph, $G\neq \overline{K_{n}}$ and $T_{1}$ is an independent set of $G$, then there exists $T_{2}$ such that, $T_{2}$ is an independent dominating set for $T_{1}$ and
$ \vert T_{1} \cap T_{2}\vert \leq \frac{2\Delta(G) -\delta(G) }{2\Delta(G)} \vert T_{1}\vert$.
\end{lem}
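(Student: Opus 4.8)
The plan is to reduce the statement to a covering problem and then solve that problem probabilistically. Write $W=N(T_{1})$. Since $T_{1}$ is independent, $W\cap T_{1}=\varnothing$, every vertex of $W$ has a neighbour in $T_{1}$ (hence at most $\Delta-1$ neighbours inside $W$), and every $v\in T_{1}$ has all of its $d(v)\geq\delta$ neighbours in $W$. The key reduction is the following: if $D\subseteq W$ is independent, set $U=T_{1}\setminus N(D)$ and $T_{2}=D\cup U$. Then $T_{2}$ is independent (each of $D,U$ is independent, and there is no edge between them because $U$ avoids $N(D)$), $T_{2}$ dominates $T_{1}$ (a vertex of $T_{1}$ is either caught by $D$ or lies in $U\subseteq T_{2}$), and $T_{1}\cap T_{2}=U$, so $\vert T_{1}\cap T_{2}\vert=\vert T_{1}\vert-\vert N(D)\cap T_{1}\vert$. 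Hence it suffices to produce an independent set $D\subseteq W$ with $\vert N(D)\cap T_{1}\vert\geq\frac{\delta}{2\Delta}\vert T_{1}\vert$; the excluded case $G=\overline{K_{n}}$ and the degenerate case $\delta=0$ (where one simply takes $T_{2}=T_{1}$) are dealt with separately.

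To build $D$ I would use the probabilistic method. Assign to the vertices of $W$ independent uniform labels and let $D$ be the set of local minima, i.e.\ those $w\in W$ whose label is smaller than the labels of all of their neighbours inside $W$; this $D$ is automatically independent. For a fixed $v\in T_{1}$ let $Z_{v}$ count the neighbours of $v$ that land in $D$. Since each $w\in N(v)\subseteq W$ has at most $\Delta-1$ neighbours in $W$, we have $\Pr[w\in D]\geq 1/\Delta$, whence $\mathbb{E}[Z_{v}]\geq d(v)/\Delta\geq\delta/\Delta$. The goal is then to prove $\Pr[Z_{v}\geq 1]\geq\frac{\delta}{2\Delta}$; summing over $v$ and using linearity of expectation gives $\mathbb{E}\,\vert N(D)\cap T_{1}\vert\geq\frac{\delta}{2\Delta}\vert T_{1}\vert$, so some outcome yields a set $D$ with the desired coverage, which finishes the reduction.

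The main obstacle is exactly the per-vertex estimate $\Pr[Z_{v}\geq 1]\geq\frac{\delta}{2\Delta}$. A clean line of attack is the second-moment (Paley--Zygmund) inequality $\Pr[Z_{v}\geq 1]\geq(\mathbb{E}Z_{v})^{2}/\mathbb{E}[Z_{v}^{2}]$, which would give $\Pr[Z_{v}\geq 1]\geq\frac{\mathbb{E}Z_{v}}{1+\mathbb{E}Z_{v}}\geq\frac{\delta}{\Delta+\delta}\geq\frac{\delta}{2\Delta}$, the last step using $\delta\leq\Delta$, provided one can control $\mathbb{E}[Z_{v}^{2}]$, i.e.\ the joint probabilities $\Pr[w,w'\in D]$ for two neighbours $w,w'$ of $v$. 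The difficulty is that these events are not in general negatively correlated: two neighbours of $v$ sharing a common vertex of $W$ are \emph{positively} correlated, so the naive bound $\Pr[w,w'\in D]\leq\Pr[w\in D]\Pr[w'\in D]$ can fail. The heart of the proof is therefore a careful treatment of these correlations, separating the regime where the neighbourhoods of the $w$'s are nearly disjoint (where near-independence gives $\Pr[Z_{v}\geq 1]\approx 1-(1-1/\Delta)^{d(v)}$) from the regime of heavy overlap (where taking the global minimum label over $N(v)$ together with its $W$-neighbours already forces a member of $N(v)$ into $D$, giving $\Pr[Z_{v}\geq 1]\geq \vert N(v)\vert/\bigl(\vert N(v)\vert+\Delta-1\bigr)$). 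Each regime on its own delivers at least $\frac{\delta}{2\Delta}$, and reconciling them into one uniform bound is the crux. As an alternative to the probabilistic step, one may instead exhibit a proper $\Delta$-colouring of $G[W]$ (which exists since $G[W]$ has maximum degree at most $\Delta-1$) in which every neighbourhood $N(v)$ meets many colour classes, and then take the colour class covering the most vertices of $T_{1}$.
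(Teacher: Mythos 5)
Your reduction is sound and is essentially the same skeleton the paper uses: find an independent $D\subseteq N(T_{1})$ covering at least $\frac{\delta}{2\Delta}\vert T_{1}\vert$ vertices of $T_{1}$, then take $T_{2}=D\cup\bigl(T_{1}\setminus N(D)\bigr)$. The problem is that the covering step --- which is the entire quantitative content of the lemma --- is not actually proved. You need $\Pr[Z_{v}\geq 1]\geq\frac{\delta}{2\Delta}$ for the local-minima set $D$, and your route to it via Paley--Zygmund requires $\mathbb{E}[Z_{v}^{2}]\leq\mathbb{E}[Z_{v}]+(\mathbb{E}[Z_{v}])^{2}$, i.e.\ pairwise non-positive correlation of the events $\{w_{i}\in D\}$; you yourself observe that two neighbours of $v$ with a common $W$-neighbour are positively correlated, so this inequality is not available. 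The two extreme regimes you sketch (near-disjoint neighbourhoods, heavily overlapping neighbourhoods) do each give the bound, but the intermediate regime is left as ``the crux'' and no uniform argument is supplied; as written the proof stops exactly where the difficulty begins. The fallback via a proper $\Delta$-colouring of $G[W]$ does not close the gap either: a generic proper colouring only guarantees that each $N(v)$ meets \emph{one} colour class, so the best class covers only $\vert T_{1}\vert/\Delta$ vertices of $T_{1}$, which is weaker than $\frac{\delta}{2\Delta}\vert T_{1}\vert$ once $\delta>2$; the extra property ``every $N(v)$ meets many colour classes'' is itself an unproved (and dynamic-colouring-flavoured) assertion.

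For comparison, the paper avoids all correlation issues with a short greedy argument. Set $f(u)=\vert N(u)\cap T_{1}\vert$ for $u\in N(T_{1})$, so $\sum_{u}f(u)\geq\delta\vert T_{1}\vert$. Repeatedly pick $u$ maximising $f$, add it to $D$, and delete from consideration $u$, its neighbours in $N(T_{1})$, and the $t_{i}=f(u)$ newly covered vertices of $T_{1}$. Each such step removes at most $2t_{i}\Delta$ from $\sum_{u}f(u)$ (at most $t_{i}\Delta$ from zeroing $u$ and its $W$-neighbours, each of which has $f$-value at most $t_{i}$ by maximality, and at most $t_{i}\Delta$ from deleting the covered $T_{1}$-vertices), so the total coverage $s=\sum t_{i}$ satisfies $\delta\vert T_{1}\vert\leq 2s\Delta$, i.e.\ $s\geq\frac{\delta}{2\Delta}\vert T_{1}\vert$. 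If you want to keep a probabilistic proof you would need to supply the missing per-vertex (or aggregate) estimate; otherwise the deterministic greedy is both shorter and complete.
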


\begin{proof}{

The proof is constructive. In order to find $T_2$, perform Algorithm 1.
When Algorithm 1 terminates, because of Step $3$ and Step $5$, $ T_{4} $ is an independent set and because of Step $3$ and Step $6$, $ T_{4} $ is a dominating set for $ T_{1} \backslash T_{3} $. Now let $T_{2}=T_{4}\cup T_{3}$, because of Step $6$, $ T_{2} $ is an independent dominating set for $T_{1}$.
Assume that Algorithm 1 has $l$ iterations. Because of Step $6$, we have $ s=\sum_{i=1}^{l} t_{i} $.
 Each vertex in $N(T_{1})$ has at most $\Delta(G)-1 $ neighbors in $N(T_{1}) $, so in Step $4$ of the $i$th iteration, $\sum_{u\in N(T_{1})} f(u)$ is decreased at most $t_{i} \Delta(G)$ and in Step $5$ of the $i$th iteration, $\sum_{u\in N(T_{1})} f(u)$ is decreased at most $ t_{i} \Delta(G)$, so in the $i$th iteration, $\sum_{u\in N(T_{1})} f(u)$ is decreased at most $2 t_{i} \Delta(G)$. When Algorithm 1 terminates, $\sum_{ u\in N(T_{1})} f(u)=0 $, so:

$ \delta(G)\vert T_{1}\vert - \displaystyle\sum_{ i=1 }^{ i=l } (2t_{i}\Delta(G)) \leq 0$,

 $ \delta(G)\vert T_{1}\vert - 2s\Delta(G) \leq 0$,

 $  s\geq \frac{\delta(G)}{2\Delta(G)} \vert T_{1}\vert$ ,

$\vert T_{1} \cap T_{2}\vert =\vert T_{3}\vert = \vert T_{1}\vert - s \leq  \frac{2\Delta(G) -\delta(G) }{2\Delta(G)} \vert T_{1}\vert$.

}\end{proof}

\begin{algorithm}
\label{ZZ}
\caption{}

\begin{algorithmic}

\STATE\noindent {\bf Step 1.} For each $u\in N(T_{1})$, define the variable $ f(u) $ as the number of vertices which are adjacent to $u$ and are in $T_{1}$. $\sum_{u\in N(T_{1})} f(u)$ is the number of edges of $G[T_{1},N(T_{1})]$, so $\sum_{u\in N(T_{1})} f(u)\geq \vert T_{1}\vert\delta(G)$.

\STATE\vspace*{.05 cm}

\STATE\noindent {\bf Step 2.} Let $T_{3}=T_{1}$, $T_{4}=\emptyset $, $s=0$, $i=1$.

\STATE\vspace*{.05 cm}

\STATE\noindent {\bf Step 3.} Select a vertex $u$ such that $f(u)$ is maximum among $ \lbrace f(v) \vert v\in N(T_{1})\rbrace$ and add $u$ to the set $T_{4}$ and let $t_{i}=f(u)$.

\STATE\vspace*{.05 cm}

\STATE\noindent {\bf Step 4.} For each $v\in N(T_{1})$ that is adjacent to $u$, change the value of $ f(v) $ to $0$. Change the value of $ f(u) $ to $0$.

\STATE\vspace*{.05 cm}

\STATE\noindent {\bf Step 5.} For each $v\in N(T_{1})$ that is adjacent to at least one vertex of $N(u)\cap T_{3}$ and it is not adjacent to $u$, decrease $ f(v)$ by the number of common neighbors of $v$ and $u$ in $T_{3}$.

\STATE\vspace*{.05 cm}

\STATE\noindent {\bf Step 6.} Remove the elements of $N(u)$ from $T_{3}$. Increase $s$ by $t_{i}$ and $i$ by $1$.

\STATE\vspace*{.05 cm}

\STATE\noindent {\bf Step 7.} If $\sum_{u\in N(T_{1})} f(u)>0$ go to Step 3.

\end{algorithmic}
\end{algorithm}

\begin{lem}\label{L.2}
If $G$ is a graph, $ \delta \geq 2 $ and $T$ is an independent set of $G$, then we can color the vertices of $T$ with $ \lceil (4\Delta^{2})^{\frac{1}{\delta-1}} \rceil $ colors such that for each $u\in \lbrace v \vert v\in V(G), N(v)\subseteq T\rbrace$, $N(u)$ has at least two different colors.
\end{lem}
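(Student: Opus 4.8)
The plan is to apply the Lovász Local Lemma (Lemma~2) to a uniformly random coloring of $T$. Set $k=\lceil (4\Delta^{2})^{1/(\delta-1)}\rceil$ and color each vertex of $T$ independently and uniformly at random with one of these $k$ colors. For every vertex $u$ with $N(u)\subseteq T$, let $A_{u}$ be the ``bad'' event that all vertices of $N(u)$ receive the same color. Since $\delta\geq 2$, each such $u$ satisfies $d(u)\geq 2$ and all of its neighbors lie in $T$ and are therefore colored; hence avoiding every event $A_{u}$ is precisely the required conclusion, because then each $N(u)$ carries at least two distinct colors. (Note that $T$ being independent forces $u\notin T$, but this will not be needed.)

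First I would estimate $Pr(A_{u})$. Because the $d(u)$ neighbors of $u$ are colored independently, the probability that they all share a common color is $k\cdot(1/k)^{d(u)}=(1/k)^{d(u)-1}$, and since $d(u)\geq\delta$ this is at most $p:=(1/k)^{\delta-1}$. Next I would bound the dependency degree. The event $A_{u}$ is determined solely by the colors assigned to $N(u)$, so by the standard disjoint-variables argument $A_{u}$ is mutually independent of every $A_{u'}$ with $N(u)\cap N(u')=\emptyset$; thus $A_{u}$ can depend only on those $A_{u'}$ that share a common neighbor with $u$. For each $w\in N(u)$ there are at most $|N(w)|\leq\Delta$ vertices $u'$ with $w\in N(u')$, and $|N(u)|\leq\Delta$, so the number of $u'\neq u$ sharing a neighbor with $u$ is at most $\Delta(\Delta-1)<\Delta^{2}$. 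Hence $A_{u}$ is mutually independent of all but at most $d:=\Delta^{2}$ other events.

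Finally I would verify the hypothesis $4pd\leq 1$ of Lemma~2. Here $4pd=4\Delta^{2}(1/k)^{\delta-1}$, and the choice $k=\lceil (4\Delta^{2})^{1/(\delta-1)}\rceil\geq (4\Delta^{2})^{1/(\delta-1)}$ gives $k^{\delta-1}\geq 4\Delta^{2}$, so indeed $4pd\leq 1$. The Local Lemma then guarantees that with positive probability none of the events $A_{u}$ occurs, so a coloring of $T$ with the desired property exists. I expect the only delicate points to be the probability estimate (where the bound $d(u)\geq\delta$ is essential) and the counting that yields the dependency degree $\Delta^{2}$; both are routine once the random model and the dependency graph are fixed, and everything else is a direct substitution into Lemma~2.
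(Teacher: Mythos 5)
Your proposal is correct and follows essentially the same route as the paper: a uniformly random coloring of $T$ with $\lceil (4\Delta^{2})^{1/(\delta-1)}\rceil$ colors, the bad events $A_{u}$, the bounds $p\leq (1/k)^{\delta-1}$ and $d\leq\Delta^{2}$, and the Local Lemma condition $4pd\leq 1$. Your write-up is in fact slightly more detailed than the paper's, since you justify the dependency bound $\Delta^{2}$ rather than merely asserting it.
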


\begin{proof}{
Let $ \eta = \lceil (4\Delta^{2})^{\frac{1}{\delta-1}} \rceil  $. Color every vertex of $T$ randomly and independently by one color from $ \lbrace 1,\cdots,\eta \rbrace $, with the same probability.
For each $u\in \lbrace v \vert v\in V(G), N(v)\subseteq T\rbrace$, let $A_{u}$ be the event that all
of the neighbors of $ u $ have a same color.
Each $A_{u}$ is mutually independent of a set of all $A_{v}$ events but at most $\Delta ^{2}$ of them.
Clearly, $Pr(A_{u})\leq \frac{1}{\eta^{\delta-1}}$. We have: $4pd= 4(\frac{1}{\eta})^{\delta-1} \Delta ^{2}\leq 1$.
So by Local Lemma there exists a coloring with our condition for $T$ with positive probability.

}\end{proof}

\begin{lem}\label{L.3}
Let $c$ be a vertex $k$-coloring of a graph $G$, then there exists a 2-hued coloring of $G$ with at most
$k+\vert B_{c}\vert$ colors.
\end{lem}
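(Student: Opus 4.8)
The plan is to start from the given proper $k$-coloring $c$ and repair exactly the vertices that violate the dynamic condition, namely those in $B_{c}$, by spending one brand-new color for each of them. Recall that a vertex $v$ lies in $B_{c}$ precisely when $d(v)\geq 2$ and all of its neighbors carry a single color under $c$; such a vertex is the only kind that fails the dynamic requirement, while every vertex of $A_{c}=V(G)\setminus B_{c}$ already satisfies it.

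First I would enumerate $B_{c}=\{v_{1},\dots,v_{m}\}$ with $m=|B_{c}|$ and, for each $i$, pick an arbitrary neighbor $u_{i}\in N(v_{i})$ (possible since $d(v_{i})\geq 2$). Let $S=\{u_{1},\dots,u_{m}\}$. I would then define a new coloring $c'$ that agrees with $c$ off $S$ and assigns to the vertices of $S$ pairwise distinct fresh colors taken from $\{k+1,k+2,\dots\}$. Since $|S|\leq m$, the coloring $c'$ uses at most $k+|B_{c}|$ colors, which is the claimed bound.

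The key observation is that a fresh color is used on exactly one vertex, so any vertex adjacent to a recolored vertex automatically sees a color that none of its other neighbors can repeat. Properness of $c'$ is then immediate: the fresh colors create no conflict because each is used once, and on the remaining vertices $c'$ coincides with the proper coloring $c$. For the dynamic property I would argue vertex by vertex. Each $v_{i}\in B_{c}$ is adjacent to $u_{i}\in S$, whose fresh color differs from the common color of $v_{i}$'s other neighbors, so $v_{i}$ now sees two colors. A vertex of $A_{c}$ with no neighbor in $S$ keeps its neighborhood colors unchanged and hence stays satisfied, while any vertex of degree at least $2$ having a neighbor in $S$ sees that neighbor's unique fresh color together with the color of a second neighbor, which is necessarily different since the fresh color belongs to $u_{i}$ alone.

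The only point that needs care — and where one might naively fear trouble — is that recoloring could introduce new violations among previously good vertices or among the chosen neighbors themselves, and that two bad vertices may share a chosen neighbor. All of these are dissolved by insisting that the new colors be pairwise distinct and disjoint from $\{1,\dots,k\}$: a vertex of degree at least $2$ can never become monochromatic in its neighborhood once one of those neighbors wears a globally unique color, and a shared neighbor simply repairs two bad vertices at once while consuming fewer colors. Thus no repair ever undoes another, and the construction yields a genuine dynamic coloring within the stated budget.
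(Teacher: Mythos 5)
Your proof is correct and takes essentially the same approach as the paper: for each $v_i\in B_c$ pick a neighbour $u_i$ and give it a brand-new colour, so that every formerly bad vertex sees a globally unique colour in its neighbourhood. The paper states this in one line; you merely add the (worthwhile) verification that properness and the dynamic condition survive for all vertices, and handle the case of shared chosen neighbours explicitly.
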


\begin{algorithm}
\label{A2}
\caption{}

\begin{algorithmic}

\STATE \noindent {\bf Step 1.}  Let $T'_{1}=T_{1}$.

\STATE\vspace*{.05 cm}

\FOR{$i = 2$ to $i=k$}

\STATE\vspace*{.05 cm}

\STATE  \noindent {\bf Step 2.} By Algorithm 1, find an independent set $T_{i}$ such that, $T_{i}$ is an independent dominating set for $T'_{i-1}$ and $\vert T_{i} \cap T'_{i-1} \vert\leq \frac{2\Delta(G) -\delta(G)} {2\Delta(G) } \vert T'_{i-1}\vert$.

\STATE\vspace*{.05 cm}

\STATE \noindent {\bf Step 3.} By Lemma \ref{L.2}, recolor the vertices of $T_{i}$ with the colors $\chi+\eta i-(\eta -1),\ldots, \chi+\eta i $ such that for each $u\in \lbrace v \vert v\in V(G), N(v)\subseteq T_{i}\rbrace$, $N(u)$ has at least two different colors.

\STATE\vspace*{.05 cm}

\STATE \noindent {\bf Step 4.} Let $ T'_{i}=T_{i} \cap T'_{i-1} $.

\STATE\vspace*{.05 cm}

\ENDFOR

\end{algorithmic}
\end{algorithm}

\begin{ali}{
Let $ \eta = \lceil \sqrt[\delta -1]{4\Delta^{2}}  \rceil  $ and $k=\lfloor \log _{\frac{2\Delta(G)}{2\Delta(G)-\delta(G)}} \alpha(G) \rfloor +1$. By Lemma \ref{L.C}, let $T_{1}$ be an independent dominating set for $G$. Consider a vertex $\chi(G)$-coloring of $G$, by Lemma \ref{L.2}, recolor the vertices of $T_{1}$ by the colors $\chi +1,\ldots,\chi+\eta$ such that for each $u\in \lbrace v \vert v\in V(G), N(v)\subseteq T_{1}\rbrace$, $N(u)$ has at least two different colors. Therefore we obtain a coloring $c_{1}$ such that $B_{c_{1}}\subseteq T_{1}$. Now, perform Algorithm \ref{A2}.
After each iteration of Algorithm \ref{A2}
we obtain a coloring $c_{i}$ such that $B_{c_{i}}\subseteq T'_{i}$, so when the procedure terminates, we have a coloring $c_{k}$ with at most $\chi(G) +\eta k $ colors, such that $B_{c_{k}}\subseteq T'_{k}$ and $\vert T'_{k}\vert \leq 1$, so by Lemma \ref{L.3} we have a 2-hued coloring  with at most $\chi(G) +\eta k +1 $ colors.

}\end{ali}

\begin{cor}
if $G$ is a graph and $\Delta(G)\leq   2^{\frac{\delta(G) -3}{2}}$, then
$ \chi_{2}(G)- \chi(G) \leq  2\lfloor \log _{\frac{2\Delta(G)}{2\Delta(G)-\delta(G)}} (\alpha(G)) \rfloor +3 $.
\end{cor}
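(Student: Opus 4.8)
The plan is to obtain this as an immediate specialization of Theorem~\ref{t1}: under the stated hypothesis the factor $\eta=\lceil \sqrt[\delta-1]{4\Delta^{2}}\rceil$ occurring in Theorem~\ref{t1} collapses to at most $2$, and feeding $\eta\le 2$ into that bound produces exactly the claimed inequality. So the whole argument is a substitution, with two small points to check: that Theorem~\ref{t1} is applicable, and that the hypothesis really does force $\eta\le 2$.

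First I would confirm applicability, i.e. that $\delta(G)\ge 2$ so that the $(\delta-1)$-th roots and the logarithms below are well defined. Every graph satisfies $\Delta(G)\ge\delta(G)$, so for a graph with at least one edge the hypothesis $\Delta(G)\le 2^{\frac{\delta(G)-3}{2}}$ combined with $\delta(G)\le\Delta(G)$ rules out $\delta(G)=1$ (which would demand $\Delta(G)\le 2^{-1}<1$); hence $\delta(G)\ge 2$ and Theorem~\ref{t1} may be invoked. In fact the two inequalities $\delta\le\Delta\le 2^{(\delta-3)/2}$ force $\delta$ to be considerably larger, but $\delta\ge 2$ is all we need.

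The key computation is the bound on $\eta$. Starting from $\Delta\le 2^{\frac{\delta-3}{2}}$, squaring and multiplying by $4$ gives
\begin{center}
$4\Delta^{2}\le 4\cdot 2^{\delta-3}=2^{2}\cdot 2^{\delta-3}=2^{\delta-1}$.
\end{center}
Taking $(\delta-1)$-th roots, which is legitimate since $\delta-1>0$, yields $\sqrt[\delta-1]{4\Delta^{2}}\le 2$. Because the ceiling function is monotone and $\lceil 2\rceil=2$, this gives $\eta=\lceil \sqrt[\delta-1]{4\Delta^{2}}\rceil\le 2$.

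Finally I would substitute into Theorem~\ref{t1}. Writing $L=\lfloor \log^{\alpha(G)}_{\frac{2\Delta(G)}{2\Delta(G)-\delta(G)}}\rfloor$, that theorem gives $\chi_{2}(G)-\chi(G)\le \eta(L+1)+1\le 2(L+1)+1=2L+3$, which is precisely the assertion. There is no genuine obstacle here: the proof is a direct substitution, and the only places demanding care are verifying that the hypothesis guarantees $\delta(G)\ge 2$ so that Theorem~\ref{t1} applies, and using monotonicity of the ceiling to pass from $\sqrt[\delta-1]{4\Delta^{2}}\le 2$ to $\eta\le 2$.
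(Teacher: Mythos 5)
Your proof is correct and is exactly the intended derivation: the paper states this corollary immediately after Theorem 3 with no separate argument, the point being precisely that $\Delta\le 2^{(\delta-3)/2}$ gives $4\Delta^{2}\le 2^{\delta-1}$, hence $\eta=\lceil\sqrt[\delta-1]{4\Delta^{2}}\rceil\le 2$, and substitution yields $2L+3$. Your additional check that the hypothesis forces $\delta(G)\ge 2$ (so Theorem 3 applies) is a sensible detail the paper leaves implicit.
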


\begin{thm}\label{Th.2}
If $G$ is a regular graph, then
$ \chi_{2}(G)- \chi(G) \leq  2\lfloor \log _{2} (\alpha(G))\rfloor +3$.
\end{thm}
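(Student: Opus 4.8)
The plan is to run the argument of Theorem~\ref{t1} essentially unchanged, exploiting regularity in two ways. First, for an $r$-regular graph $\Delta(G)=\delta(G)=r$, so the base of the logarithm collapses: $\frac{2\Delta(G)}{2\Delta(G)-\delta(G)}=\frac{2r}{r}=2$. Hence the number of rounds becomes $k=\lfloor\log^{\alpha(G)}_{2}\rfloor+1$ and the shrinking factor supplied by Lemma~4 is exactly $\frac12$. Second, and crucially, I would replace the Local-Lemma colouring of Lemma~5 --- which costs $\eta=\lceil\sqrt[\delta-1]{4\Delta^{2}}\rceil$ colours, a quantity exceeding $2$ for small $r$ --- by a genuine $2$-colouring. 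If every round can be carried out with $\eta=2$ fresh colours, the count of Theorem~\ref{t1} becomes $\chi(G)+2k+1=\chi(G)+2\lfloor\log^{\alpha(G)}_{2}\rfloor+3$, which is exactly the asserted bound.

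For the principal case $r\geq 4$ I would invoke Lemma~2 at each recolouring. At the stage where Theorem~\ref{t1} recolours an independent set $T_{i}$, form the bipartite graph whose parts are $B=T_{i}$ and $A=\{u\in V(G):N(u)\subseteq T_{i}\}$, joining $u\in A$ to $v\in B$ whenever they are adjacent in $G$. Regularity forces every $u\in A$ to have all $r$ of its neighbours inside $T_{i}$, so its degree in this bipartite graph is exactly $r$, while each vertex of $T_{i}$, having $r$ neighbours in $G$ altogether, has degree at most $r$ there. Thus Lemma~2 (applicable since $r\geq 4$) colours $T_{i}$ with two colours so that every $u$ with $N(u)\subseteq T_{i}$ sees both. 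This is precisely the conclusion of Lemma~5 with $\eta=2$, so the substitution is legitimate and the invariant $B_{c_{i}}\subseteq T'_{i}$ carries over verbatim from Theorem~\ref{t1}. Running the iteration for $i=1,\dots,k$ with factor $\frac12$ gives $\vert T'_{k}\vert\leq 2^{-(k-1)}\vert T_{1}\vert\leq \alpha(G)\,2^{-\lfloor\log^{\alpha(G)}_{2}\rfloor}<2$, so $\vert T'_{k}\vert\leq 1$, and Lemma~6 closes the remaining gap with a single extra colour.

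It remains to dispose of $r\leq 3$, where Lemma~2 does not apply. There $\Delta(G)\leq 3$, so Theorem~1 gives $\chi_{2}(G)\leq 5$. If $\alpha(G)=1$ then $G$ is complete and $\chi_{2}(G)=\chi(G)$, so the difference is $0$; if $\alpha(G)\geq 2$ then $\lfloor\log^{\alpha(G)}_{2}\rfloor\geq 1$, so the right-hand side is at least $5$, while $\chi_{2}(G)-\chi(G)\leq\chi_{2}(G)\leq 5$. Either way the inequality holds.

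The only genuinely new ingredient, and the one place where care is needed, is the reduction from $\eta$ to $2$ through Lemma~2. The degree-exactly-$r$ hypothesis on side $A$ is exactly why I must place on that side only the vertices $u$ with $N(u)\subseteq T_{i}$; it is regularity that then guarantees this degree, and a non-regular graph would not. One must also confirm that recolouring $T_{i}$ with two fresh colours destroys neither properness (immediate, since $T_{i}$ is independent and the colours are new) nor the dynamic property secured in earlier rounds --- a vertex outside $T'_{i}$ either has $N(u)\subseteq T_{i}$ and is handled by Lemma~2, or is adjacent to a fresh colour and an older one, or is untouched --- but this is already embedded in Theorem~\ref{t1}, so beyond checking the two bipartite degree conditions no additional work is required.
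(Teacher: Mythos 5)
Your proposal is correct and follows essentially the same route as the paper: for $r\geq 4$ it reruns the proof of Theorem 3 with base-$2$ shrinkage (since $\Delta=\delta=r$) and substitutes Lemma 2, applied to the bipartite graph with parts $A=\{u: N(u)\subseteq T_i\}$ and $B=T_i$, for the Local-Lemma recolouring of Lemma 5, which is exactly the paper's modification; the small cases $r\leq 3$ are dispatched via Theorem 1 just as in the paper (your case split on $\alpha(G)$ versus the paper's use of $\chi(G)\geq 2$ is an immaterial difference).
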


\begin{proof}{
If $r=0$, then the theorem is obvious. For $1\leq r\leq 3$, we have $\chi(G)\geq 2$, by Theorem \ref{Th.B}, $ \chi_{2}(G)\leq 5 $ so $\chi_{2}(G)\leq \chi(G) +3$.
So assume that $ r \geq 4 $, we use a proof similar to the proof of Theorem \ref{Th.1}. In the proof of Theorem \ref{Th.1}, for each $i$, $ 1\leq i \leq k $ we used Lemma \ref{L.2}, to recolor the vertices of $T_{i}$ with the colors $\chi+\eta i-(\eta -1),\ldots, \chi+\eta i $ such that for each $u\in \lbrace v \vert v\in V(G), N(v)\subseteq T_{i}\rbrace$, $N(u)$ has at least two different colors. In the new proof, for each $i$, $ 1\leq i \leq k $, let $ A=  \lbrace v \vert v\in V(G), N(v)\subseteq T_{i}\rbrace$ and $B=T_{i}$ and by Lemma \ref{L.B}, recolor the vertices of $T_{i}$ with the colors $ \chi+2i-1 $ and $ \chi+2i $ such that for each $u\in \lbrace v \vert v\in V(G), N(v)\subseteq T_{i}\rbrace$, $N(u)$ has at least two different colors. The other parts of the proof are similar. This completes the proof.
 }\end{proof}


\begin{thm}\label{Th.3}
If $G$ is a simple graph, then $ \chi_{2}(G)-\chi(G) \leq \frac{\alpha(G)+\omega(G) }{2} +k^* $.
\end{thm}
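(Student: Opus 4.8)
The plan is to reduce, exactly as in Corollary 2 and Theorem 4, to the problem of ``repairing'' an independent set of bad vertices, and then to gain the factor $\tfrac12$ by repairing vertices two at a time, the unavoidable leftover being controlled by a clique. First I would invoke Lemma 8 to obtain a proper $(\chi(G)+3)$-coloring $c'$ for which $B:=B_{c'}$ is an independent set; since $B$ is independent, $|B|\le\alpha(G)$, and every $v\in B$ has $d(v)\ge 2$ with all of $N(v)$ sharing a single color $\phi(v)$. By Lemma 6 it then suffices to show that the dynamic property can be restored for all of $B$ at the cost of at most $\tfrac{\alpha(G)+\omega(G)}{2}$ extra colors; added to the three colors already spent this yields the claimed bound.

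The repair mechanism is the one used in Case 2B of Theorem 4: if $u,v\in B$ have a common neighbor $z$, recoloring $z$ with a brand-new color simultaneously gives both $u$ and $v$ a second color in their neighborhood, at a cost of one color for two vertices. Moreover this operation is safe: a previously unused color assigned to $z$ can only increase the number of colors seen by any neighbor of $z$, so no vertex of $A_{c'}$ is turned bad, while $u$ and $v$ leave the bad set. Accordingly I would build the auxiliary graph $\Gamma$ on $B$ in which $u\sim v$ iff $u$ and $v$ have a common neighbor, take a maximum matching $M$ in $\Gamma$, and repair each matched pair with its own fresh color; this fixes $2|M|$ vertices using $|M|$ colors. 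The set $U$ of $\Gamma$-unmatched vertices is independent in $\Gamma$, i.e.\ the neighborhoods $\{N(u):u\in U\}$ are pairwise disjoint.

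It remains to repair $U$, and here is where $\omega(G)$ enters. Let $c_U$ denote the number of extra colors spent on $U$. Writing $|B|=2|M|+|U|$, the target $|M|+c_U\le\tfrac{\alpha+\omega}{2}$ becomes, after substituting $|B|\le\alpha$, the requirement $c_U\le\tfrac{|U|+\omega}{2}$; that is, the disjoint-neighborhood set $U$ must be repaired with at most $\tfrac{|U|+\omega}{2}$ colors. To achieve this I would again repair $U$ in pairs, but now by \emph{reusing} a single color on two vertices whose repairs do not interfere: choosing a representative neighbor $w_u\in N(u)$ for each $u\in U$, two vertices $u,u'$ may share one color provided $w_u$ and $w_{u'}$ are non-adjacent and have no common neighbor, so that assigning them the same fresh color is proper and creates no new bad vertex. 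Pairing $U$ maximally under this compatibility relation leaves a residual set whose representatives are pairwise incompatible; the crux is to choose the representatives (using $d(u)\ge 2$, so that each $u$ offers at least two candidates) so that the only surviving obstruction is genuine adjacency, whereupon the residual representatives form a clique of $G$ and hence number at most $\omega(G)$. Granting this, $U$ is repaired with at most $\tfrac{|U|-\omega}{2}$ shared colors plus $\omega$ singleton colors, i.e.\ with $\tfrac{|U|+\omega}{2}$ colors, and the bound follows.

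The main obstacle is precisely this last step: extracting a clique of size equal to the residual set. The danger is that two representatives are incompatible not because they are adjacent but because they share a low-degree common neighbor whose recoloring would create a fresh monochromatic neighborhood; such conflicts must be removed by re-selecting representatives before the clique can be read off, and verifying that a consistent choice always exists (or, alternatively, that any minimal incompatible configuration still embeds a clique of the same size) is the delicate part of the argument. Everything else --- the reduction via Lemmas 6 and 8, the safety of recoloring with fresh colors, and the final arithmetic $|M|+\tfrac{|U|+\omega}{2}=\tfrac{|B|+\omega}{2}\le\tfrac{\alpha+\omega}{2}$ --- is routine.
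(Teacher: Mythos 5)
Your overall strategy matches the paper's: reduce via Lemma 8 to an independent bad set, repair pairs with a common neighbour at one colour per pair, repair remaining pairs by reusing one fresh colour on two non-adjacent representatives, and charge the residue to $\omega(G)$. The arithmetic is also the same. But the step you yourself flag as ``the delicate part'' is a genuine gap, and it is exactly where the paper's proof does something you do not: it does \emph{not} work with $B_{c}$ directly, but first extends $B_{c}$ to a \emph{maximal} independent set $T_{1}$ and repairs all of $T_{1}$. Maximality is what kills the obstruction you worry about. If $z$ is a degree-$2$ vertex whose two neighbours are the two chosen representatives $u_{2i-1}\in N(v_{2i-1})$, $u_{2i}\in N(v_{2i})$ that received the same fresh colour, then both neighbours of $z$ are neighbours of $T_{1}$-vertices, hence not in $T_{1}$; since a maximal independent set is dominating, $z$ must itself lie in $T_{1}$. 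But then $z$ and $v_{2i-1}$ are two vertices of $T_{1}$ with a common neighbour ($u_{2i-1}$), so they would have been handled in the first (common-neighbour) phase rather than both surviving into the second --- contradiction. Working with $B$ alone, the offending $z$ can be an arbitrary degree-$2$ vertex of $A_{c'}$, and I see no way to rule out a residual set of, say, five vertices that are pairwise incompatible only through such degree-$2$ witnesses while $\omega(G)<5$; your fallback (``any minimal incompatible configuration still embeds a clique of the same size'') is unsubstantiated.

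A second, smaller divergence compounds the problem: you fix one representative $w_u\in N(u)$ per vertex first and then pair by compatibility of representatives, which is what creates your re-selection difficulty. The paper quantifies the other way: it pairs $v_i,v_j$ whenever there \emph{exist} non-adjacent $u\in N(v_i)$, $u'\in N(v_j)$, so the residual set $T_{3}$ satisfies the much stronger property that \emph{every} neighbour of $v_i$ is adjacent to \emph{every} neighbour of $v_j$; then any choice of one neighbour per residual vertex is automatically a clique, with no selection argument needed. To repair your proof, adopt both devices: replace $B$ by a maximal independent set containing it before the two pairing phases, and define the second pairing existentially over representatives. With those changes the rest of your argument (the safety of fresh colours, the matching reduction, and the computation $|M|+\tfrac{|U|+\omega}{2}\le\tfrac{\alpha+\omega}{2}$) goes through as you describe.
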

\begin{proof}{

For $\chi(G)=1 $ the theorem is obvious.
Suppose that $G$ is a connected graph with $ \chi(G) \geq 2 $, otherwise we apply the following proof for each of its connectivity components.
By Theorem \ref{Th.A}, suppose that $ c $ is a vertex $(\chi(G)+k^* )$-coloring of $G$ such that $ B_{c} $ is an independent set. Also, let $ T_{1} $ be a maximal independent set that contains $ B_{c} $. Consider the partition $ \lbrace \lbrace v_{1},v_{2} \rbrace ,\ldots \lbrace v_{2s-1},v_{2s}\rbrace ,T_{2}=\lbrace v_{2s+1},\ldots ,v_{l} \rbrace \rbrace $ for the vertices of $T_{1}  $ such that for each $i$, $ 1\leq i \leq s $, $ N(v_{2i-1})\cap N(v_{2i})\neq \emptyset$ and for every $i$ and $j$, $ 2s < i <j \leq l $, $ N(v_{i})\cap N(v_{j})=\emptyset $.
For every $i$, $ 1\leq i \leq s $, let $ w_{i}\in N(v_{2i-1})\cap N(v_{2i})$, recolor $ w_{i} $ by the color $ \chi+k^*+i $. name the resulted coloring $c^{\prime}$.
Now, consider the partition $ \lbrace\lbrace  v_{2s+1},v_{2s+2} \rbrace , \ldots ,\lbrace v_{2t-1},v_{2t} \rbrace , T_{3}=\lbrace v_{2t+1},\ldots ,v_{l} \rbrace \rbrace   $ for the vertices of $ T_{2} $ such that for $i$, $ s < i \leq t $ there exist $ u_{2i-1}\in N(v_{2i-1}) $ and $ u_{2i}\in N(v_{2i}) $, such that $ u_{2i-1}$ and $u_{2i} $ are not adjacent and for $i$ and $j$, $ 2t < i <j \leq l $, every neighbor of $v_{i}  $ is adjacent to every neighbor of $v_{j}  $.
For every $i$, $ s < i \leq t $, suppose that $ u_{2i-1}\in N(v_{2i-1}) $ and $ u_{2i}\in N(v_{2i}) $ such that $ u_{2i-1}$ and $u_{2i}  $ are not adjacent. Now if $ c(u_{2i-1}) \neq c^{\prime}(u_{2i-1}) $, then recolor $u_{2i-1}$ by the color $ \chi+k^*+i $ and also if $ c(u_{2i}) \neq c^{\prime}(u_{2i}) $, then recolor $u_{2i}$ by the color $ \chi+k^*+i $.

After above procedure we obtain a coloring, name it $ c^{\prime\prime} $. If $ z $ is a vertex with $ N(z)=\lbrace u_{2i-1},u_{2i} \rbrace $ for some $i$, $ s < i \leq t $ and $  c^{\prime\prime}(u_{2i-1})=c^{\prime\prime}(u_{2i} )$,
therefore  $  c(u_{2i})=c^{\prime}(u_{2i} )$ and $ z\in T_{1} $. Since $u_{2i}  $ is a common neighbor of $ v_{2i} $ and $ z $, therefore $\lbrace z , v_{2i-1} \rbrace \in T_{1}\setminus T_{2}$. It is a contradiction.
For $ v_{i}\in T_{3} $ let $ x_{i}\in N(v_{i}) $. Suppose that $X= \lbrace x_{i} \vert v_{i}\in T_{3} \rbrace $. The vertices of $X  $ make a clique, recolor $X$ by different new colors. We have $ \vert X \vert =l-2t\leq \omega(G) $. Therefore:

\begin{center}
$\chi_{2}(G)-\chi(G) \leq s+(t-s)+(l-2t)+k^* \leq \frac{\alpha(G) +\omega(G)}{2}+k^*$.
\end{center}

}\end{proof}

\begin{cor}
If $G$ is a triangle-free graph, then $ \chi_{2}(G)-\chi(G) \leq \frac{\alpha(G) }{2} +1+k^*  $
\end{cor}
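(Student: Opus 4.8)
The plan is to reduce to a coloring whose ``bad'' set is independent, using Lemma $8$, and then to clear the bad vertices by a carefully staged recoloring whose color cost is controlled by $\alpha(G)$ and $\omega(G)$ simultaneously.

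First I would invoke Lemma $8$ to obtain a proper $(\chi(G)+3)$-coloring $c$ whose bad set $B_c$ is independent, and then enlarge $B_c$ to a maximal independent set $T_1=\lbrace v_1,\ldots,v_l\rbrace$; since $T_1$ is independent, $l\leq\alpha(G)$. Every vertex of $B_c$ still has a monochromatic neighborhood, so to restore the dynamic property it suffices to inject a second color into each such neighborhood, and the entire point is to spend new colors on the $v_i$ as economically as possible. Note that fixing all of $T_1$ certainly fixes $B_c\subseteq T_1$, and that introducing \emph{fresh} colors can never damage a vertex already satisfying the dynamic property, since a fresh color only increases the color diversity seen at any vertex.

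The heart of the argument is a three-tier partition of $T_1$. In tier one I group the $v_i$ into pairs $\lbrace v_{2i-1},v_{2i}\rbrace$ sharing a common neighbor and recolor that shared neighbor with one fresh color, fixing two bad vertices per color (say $s$ pairs, $s$ colors). Among the survivors $T_2$, tier two pairs up $v_{2i-1},v_{2i}$ that admit non-adjacent neighbors $u_{2i-1},u_{2i}$, and gives both these neighbors the \emph{same} fresh color; this is legitimate precisely because $u_{2i-1},u_{2i}$ are non-adjacent, and again costs one color per pair (say $t-s$ pairs). The remaining set $T_3$ is one where the neighborhoods are cross-complete --- every neighbor of a remaining $v_i$ is adjacent to every neighbor of another remaining $v_j$ --- so choosing one neighbor $x_i$ of each yields a clique $X$ with $\vert X\vert=l-2t\leq\omega(G)$, and I recolor each $x_i$ with its own new color. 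The colors used beyond $\chi(G)+3$ then total $s+(t-s)+(l-2t)=l-t$, and the clique bound $l-2t\leq\omega(G)$ forces $t\geq\frac{l-\omega(G)}{2}$, so $l-t\leq\frac{l+\omega(G)}{2}\leq\frac{\alpha(G)+\omega(G)}{2}$, which is the claimed bound.

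The step I expect to be the main obstacle is verifying that the tier-two recoloring preserves the dynamic property, since assigning one color to both $u_{2i-1}$ and $u_{2i}$ threatens to create a brand-new monochromatic neighborhood: if some vertex $z$ has exactly $\lbrace u_{2i-1},u_{2i}\rbrace$ as its neighbor set, it would afterward see a single color. The resolution is structural: in that situation $u_{2i}$ would be a common neighbor of $z$ and $v_{2i}$, forcing the pair containing $z$ into tier one and contradicting the choice of partition (so that $z$ cannot have reached tier two). Making this contradiction precise, and confirming that the tier-one and tier-three recolorings (both with fresh colors) cannot undo earlier fixes, are the delicate points; the independence bound $l\leq\alpha(G)$, the clique bound $\vert X\vert\leq\omega(G)$, and the final arithmetic are then routine bookkeeping.
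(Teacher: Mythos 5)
Your argument is correct and is essentially the paper's own route: the corollary is stated as an immediate consequence of Theorem 5 (which gives $\chi_{2}(G)-\chi(G)\leq \frac{\alpha(G)+\omega(G)}{2}+3$), and your three-tier partition of the maximal independent set $T_{1}$ containing $B_{c}$ is precisely the paper's proof of that theorem, including the same resolution of the tier-two danger via the common-neighbour contradiction. The only step you leave implicit is the final substitution $\omega(G)\leq 2$ for triangle-free graphs, which converts $\frac{\alpha(G)+\omega(G)}{2}+3$ into the stated $\frac{\alpha(G)}{2}+4$.
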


If $G$ is an $r$-regular graph and $ r > \frac{n}{2} $, then every vertex $ v\in V(G) $ appears in some triangles,
therefore $ \chi_{2}(G)=\chi(G)$. In the next theorem, we present an upper bound for the 2-hued chromatic
number of $r$-regular graph $G$ with $ r \geq \frac{n}{k} $ in terms of $ n $ and $ r $.

\begin{thm}\label{Th.4}
If $G$ is an $r$-regular graph with $ n $ vertices, then $ \chi_{2}(G)-\chi(G) \leq 2 \lceil \frac{n}{r} \rceil -2$.
\end{thm}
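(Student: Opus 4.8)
The plan is to reuse the independent-set recolouring machinery behind Theorems $3$ and $4$, but to bound the number of recolouring rounds by $n$ and $r$ instead of by $\alpha(G)$. I would first clear away the boundary cases. If $G$ is disconnected I argue on each component, so assume $G$ connected. If $r\le 3$ the bound follows from Theorem $1$ together with the triangle remark preceding the statement, after inspecting the finitely many graphs with $r<n\le 2r$ (where $\lceil n/r\rceil=2$). If $r>n/2$ that same remark shows every vertex lies in a triangle, so every proper colouring is already dynamic and $\chi_{2}(G)=\chi(G)\le\chi(G)+2\lceil n/r\rceil-2$. The single observation I will lean on throughout is that a vertex $v$ can be bad under a proper colouring only when $N(v)$ is an independent set of size exactly $r$: if two neighbours of $v$ were adjacent they would receive different colours and $v$ would already satisfy the dynamic condition. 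In particular $r\le\alpha(G)\le n/2$, which is consistent with $k:=\lceil n/r\rceil\ge 2$.

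So assume $r\ge 4$. Beginning from a proper $\chi(G)$-colouring, I pick a maximal independent set $T_{1}$, which is dominating by Lemma $3$, and recolour it with two fresh colours by applying Lemma $2$ to the bipartite pair $A=\{u:N(u)\subseteq T_{1}\}$, $B=T_{1}$. Exactly as in the proof of Theorem $3$, every vertex outside $T_{1}$ then sees a fresh colour on a $T_{1}$-neighbour and an old colour on some neighbour outside $T_{1}$, or else has all its neighbours in $T_{1}$ and is handled by Lemma $2$; hence the surviving bad set is an independent set contained in $T_{1}$. A surviving bad vertex $v$ has $N(v)$ independent of size $r$, and it becomes good the moment we recolour, again with two fresh colours via Lemma $2$, any independent set $T$ with $N(v)\subseteq T$. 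Crucially, recolouring an independent set to two previously unused colours can only raise the number of colours seen in any neighbourhood, so it never creates new bad vertices; this is what lets the rounds be chained safely.

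The heart of the matter is then a covering statement: the independent neighbourhoods $\{N(v):v\ \text{bad}\}$ can be covered, as whole sets, by $p$ independent sets $T_{1},\dots,T_{p}$ with $p\le\lceil n/r\rceil$. I would establish this by a packing argument. Build the sets greedily; each time a new set $T_{j}$ is opened, seed it with an as-yet-uncovered neighbourhood $N(v_{j})$, which is independent and has size $r$. If the seeds $N(v_{1}),\dots,N(v_{p})$ can be chosen pairwise disjoint, then they occupy $rp$ distinct vertices of the $n$-vertex ground set, forcing $p\le n/r\le\lceil n/r\rceil$. Arranging the greedy choice so that each newly opened set is genuinely charged $r$ fresh vertices --- rather than overlapping heavily with the sets already built --- is the step I expect to be the main obstacle, and it is precisely here that connectivity and $r$-regularity have to be exploited (for instance, to show that a neighbourhood meeting every previous $T_{i}$ would already have been absorbed into one of them).

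Finally I would settle the colour count. Recolouring $\lceil n/r\rceil$ independent sets with two colours apiece uses at most $2\lceil n/r\rceil$ new colours, but two of them can be recovered: when $T_{1}$ is recoloured, the colours that occurred only on $T_{1}$ in the original $\chi(G)$-colouring are freed, and reusing two such colours (equivalently, letting the last round draw on two colours already present elsewhere, which is legitimate since those colours are absent from the neighbourhoods being fixed) brings the total increase down to $2\lceil n/r\rceil-2$. Making this reuse precise, and confirming that it never reintroduces a monochromatic neighbourhood, is the remaining bookkeeping. The examples $C_{4}$ and $K_{r,r}$, for which $k=2$ and the difference equals $2$, show that both the leading factor and the constant $-2$ are sharp at $k=2$.
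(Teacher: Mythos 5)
Your overall strategy --- iterated recolouring of independent sets with pairs of fresh colours via Lemma $2$, terminated by a vertex count --- is the same as the paper's, and the step you single out as ``the main obstacle'' is in fact not one. You have already observed that recolouring an independent set with two previously unused colours fixes every vertex whose neighbourhood meets that set partially, while Lemma $2$ fixes every vertex whose neighbourhood is contained in it; hence a vertex that is still bad after rounds $1,\dots,j-1$ has its $r$-element independent neighbourhood \emph{disjoint} from $T_{1}\cup\dots\cup T_{j-1}$, so in particular disjoint from every earlier seed. The pairwise disjointness of the seeds is therefore automatic and the greedy covering closes with no further work.

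The genuine gap is the last step, getting from $2\lceil n/r\rceil$ new colours down to $2\lceil n/r\rceil-2$. The mechanism you propose does not work: there need not be any colour of the original $\chi(G)$-colouring that occurs only on $T_{1}$, and reusing a colour ``already present elsewhere'' on the last independent set $T_{p}$ can destroy properness, since the vertices of $T_{p}$ have neighbours outside $T_{p}$ that still carry old colours (and can also re-create monochromatic neighbourhoods for vertices whose neighbourhood meets $T_{p}$ partially). The paper obtains the $-2$ by a different device: it runs only $\lceil n/r\rceil-1$ rounds, takes each $T_{k}$ to be a \emph{maximum} independent set of $G\setminus\bigcup_{i<k}T_{i}$, and derives a contradiction from the disjoint union $n\ge |T_{1}|+\dots+|T_{\lceil n/r\rceil-1}|+|N(u)|$ for a surviving bad vertex $u$: one has $|T_{k}|\ge|N(u)|=r$ for $k\ge 2$ because $N(u)$ is an independent set of $G\setminus\bigcup_{i<k}T_{i}$, and $|T_{1}|\ge|T_{2}|+1\ge r+1$ because $u\in T_{1}$ is non-adjacent to $T_{2}$, so $T_{2}\cup\{u\}$ is independent and the maximality of $T_{1}$ forces $|T_{2}|<|T_{1}|$. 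That strict inequality --- which requires the sets to be \emph{maximum}, not merely maximal --- is the idea missing from your write-up; in your notation it gives $n\ge(r+1)+(p-1)r$, hence $p\le\lceil n/r\rceil-1$ and $2p\le 2\lceil n/r\rceil-2$ with no colour reuse at all. (Even this needs separate care in the extremal case $\lceil n/r\rceil=2$ with $\alpha(G)=r$, e.g.\ $G=K_{r,r}$, where no $T_{2}$ is available to produce the strict inequality; the paper's own argument is silent there, and your $C_{4}$, $K_{r,r}$ examples show this is exactly where the bound is tight.)
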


\begin{proof}{
If $ r\leq 2 $, then the theorem is obvious. If $ r=3 $, then $n\geq 4$, therefore by Theorem \ref{Th.B} the theorem is clear.
Therefore suppose that $ r\geq 4 $ and $ c $ is a vertex $ \chi(G) $-coloring of $G$. For every $k$, $ 1 \leq k  \leq \lceil \frac{n}{r}  \rceil -1 $, let $ T_{k} $ be a maximum independent set of $ G  \setminus \cup _{i=1}^{k-1}T_{i}$.
By Lemma \ref{L.B}, recolor the vertices of $T_{1}$ with two new colors, such that for each $u\in \lbrace v \vert v\in V(G), N(v)\subseteq T_{1}\rbrace$, $u$ has two different colors in $N(u)$. Therefore $G$ has the coloring $ c^{\prime} $ by $ \chi(G) +2 $ colors such that $ B_{c^{\prime}}\subseteq T_{1} $.
Also by Lemma \ref{L.B} recolor every $ T_{k} $ $(  2 \leq k  \leq \lceil \frac{n}{r}  \rceil -1 ) $, by two different new colors. Thus,
$ G $ has a coloring $ c^{\prime\prime} $ such that for every vertex $ v\in V(G) $ with $ N(v)\subseteq T_{k} $, for some $ k $, $ v $ has at least two different colors in its neighbors. We claim that $ c^{\prime\prime} $ is a 2-hued coloring, otherwise suppose that $ u\in B_{c^{\prime\prime}} $. We have $ u\in T_{1}  $ so  $ N(u) $ is an independent set and $ N(u) \cap ( \cup_{i=1}^{\lceil \frac{n}{r} \rceil -1} T_{k})= \emptyset$. Consider the definitions of $ T_{k} $, $  1 \leq k  \leq \lceil \frac{n}{r}  \rceil -1 $, we have:

$ r= \vert N(u) \vert \leq \vert T_{\lceil \frac{n}{r}  \rceil-1}\vert \leq \vert T_{\lceil \frac{n}{r}  \rceil-2}\vert \leq \ldots \leq \vert T_{2}\vert  $,

and $ \vert T_{2}\vert  < \vert T_{1}\vert  $, since otherwise $ T_{2} \cup \lbrace u\rbrace $ is an independent set and $ \vert T_{2} \cup \lbrace u \rbrace \vert > \vert T_{1} \vert$.

Therefore $ n \geq r \lceil \frac{n}{r}  \rceil +1 $, but it is a contradiction. This completes the proof.
}\end{proof}

\begin{thm}\label{Th.5}
If $G$ is a simple graph, then $ \chi_{2}(G)-\chi(G) \leq \alpha^{\prime}(G) +k^*  $.
\end{thm}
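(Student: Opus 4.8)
The plan is to begin exactly as in the proof of Theorem 4: invoke Lemma $8$ to obtain a vertex $(\chi(G)+3)$-coloring $c'$ whose bad set $B := B_{c'}$ is an independent set, so the base cost is $\chi(G)+3$ colors. It then suffices to repair every vertex of $B$ by recoloring a small set of their neighbors with brand-new colors, and to show that the number of new colors needed is at most $\alpha'(G)$; this refines the crude $\alpha(G)+3$ bound obtained from Lemma $6$. Since $B$ is independent and every $v\in B$ satisfies $d(v)\ge 2$ with a monochromatic neighborhood lying entirely in $A_{c'}$, the relevant structure is the bipartite graph $H$ with parts $B$ and $N(B)$ whose edges are the $G$-edges between them.

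The key step, and the one I expect to carry the whole argument, is the following covering claim: there is a set $S\subseteq N(B)$ with $|S|\le \alpha'(G)$ such that every vertex of $B$ has a neighbor in $S$. I would prove it by taking a maximum matching $M$ of $H$ and letting $S$ be the set of $N(B)$-endpoints of $M$; then $|S|=|M|\le \alpha'(G)$, because a matching of $H$ is a matching of $G$. A matched vertex of $B$ is dominated by its own partner in $S$, while for an unmatched $v\in B$ every neighbor must already be matched (otherwise $M$ could be enlarged), and any such matched neighbor lies in $S$ and dominates $v$. This Gallai/K\"onig-type observation is exactly where the matching number enters, and correctly handling the domination of the unmatched vertices of $B$ is the main thing to check.

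Having fixed such an $S$, I would recolor each vertex of $S$ with its own fresh color, introducing $|S|\le \alpha'(G)$ new colors; properness is automatic since fresh colors cannot clash. I would then verify that no bad vertex survives and none is created: each original $v\in B$ now has a neighbor in $S$ bearing a unique new color and, since $d(v)\ge 2$, still has a neighbor of the old monochromatic color, so $v$ sees at least two colors; any vertex with two neighbors in $S$ sees two distinct new colors; any vertex with exactly one neighbor in $S$ and degree at least $2$ sees a new color together with an old one; and any vertex with no neighbor in $S$ retains its status, but a previously bad such vertex would contradict that $S$ dominates $B$. Combining these, the final coloring is dynamic and uses at most $\chi(G)+3+\alpha'(G)$ colors, giving $\chi_{2}(G)-\chi(G)\le \alpha'(G)+3$. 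The trivial cases, such as $\chi(G)\le 1$ or $\alpha'(G)=0$, are immediate and are already absorbed by Lemma $8$.
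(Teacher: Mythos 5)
Your proposal is correct, and it reaches the bound by a genuinely different route from the paper. Both arguments start from Lemma 8 and both ultimately rest on a maximum matching, but they use it differently. The paper takes a maximum matching $M$ of the whole graph $G$, recolors the $M$-partners of the matched vertices of $B_{c}$ together with the matched neighbours of the unmatched vertices of $B_{c}$, and then needs a three-case analysis (Cases A, B, C, the last being an augmenting argument that produces a matching larger than $M$) to certify that at most one endpoint of each edge of $M$ receives a new color. You instead isolate a clean covering claim: in the bipartite graph $H=G[B,N(B)]$ (well defined since $B$ is independent), the $N(B)$-endpoints of a maximum matching of $H$ form a set $S$ with $|S|\le \alpha'(G)$ dominating all of $B$ --- matched vertices of $B$ are dominated by their partners, and an unmatched $v\in B$ has all neighbours matched, else the matching could be extended. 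Giving each vertex of $S$ a fresh color then kills every bad vertex, and your check that no new bad vertices arise (two neighbours in $S$ see two distinct fresh colors; one neighbour in $S$ plus degree at least $2$ gives a fresh color next to an old one; zero neighbours in $S$ changes nothing) is complete. The paper's matching of $G$ buys nothing extra here, since only edges meeting $N(B)$ ever matter; your localization to $H$ makes the count $|S|\le\alpha'(G)$ immediate and replaces the case analysis by a standard one-line property of maximum matchings, so your version is, if anything, the cleaner of the two.
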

\begin{proof}{
Let $G$ be a simple graph. By Theorem \ref{Th.A}, suppose that $c$ is a vertex $ (\chi(G)+k^* )$-coloring of $G$ such that $ B_{c} $ is an independent set. Let $ M=\lbrace v_{1}u_{1},\ldots,v_{\alpha^{\prime}}u_{\alpha^{\prime}} \rbrace $ be a maximum matching of $G$ and $ W=\lbrace v_{1},u_{1},\ldots,v_{\alpha^{\prime}},u_{\alpha^{\prime}} \rbrace $. Let $ X=B_{c} \cap W $ and $ Y= \lbrace v_{i}\vert u_{i}\in X \rbrace \cup  \lbrace u_{i}\vert v_{i}\in X \rbrace$. Recolor the vertices of $Y$ by different new colors. Also recolor every vertex in $ N(B_{c}\setminus X) \cap W $, by a different new color. Call this coloring $ c^{\prime} $. Clearly, $ c^{\prime} $ is a 2-hued coloring of $G$. In order to complete the proof, it is enough to show that we used at most $\alpha^{\prime}(G)$ new colors in $ c^{\prime} $.
If $ e=v_{i}u_{i}  $ $ (1 \leq i \leq \alpha^{\prime}(G) ) $ is an edge of $ M $ such that $ c(v_{i})\neq c^{\prime}(v_{i}) $ and $ c(u_{i})\neq c^{\prime}(u_{i}) $, then three cases can be considered:

$\bullet$ $ v_{i},u_{i}\in Y $. It means that $ \lbrace v_{i},u_{i} \rbrace \subseteq B_{c} $. Therefore $ v_{i} $ and $ u_{i} $ are adjacent, but $ B_{c} $ is an independent set.

$\bullet$ $( v_{i}\in Y$ and $u_{i}\notin Y )$ or $( u_{i}\in Y$ and $v_{i}\notin Y )$. Without loss of generality suppose that  $v_{i}\in Y$ and $u_{i}\notin Y$. So $ u_{i}\in X $ and $ v_{i}\notin X $, therefore there exists $ u^{\prime} \in B_{c} $ such that $u^{\prime} u_{i} \in E(G)  $, but $ B_{c} $ is an independent set.

$\bullet$ $ v_{i},u_{i}\notin Y $. It means that $ v_{i},u_{i}\notin X $ and there exist $ v^{\prime},u^{\prime}\in B_{c} $ such that $ v^{\prime}v_{i},u^{\prime}u_{i}\in E(G)$. Now $ M^{\prime}= (M \setminus \lbrace v_{i}u_{i}\rbrace ) \cup \lbrace v^{\prime}v_{i},u^{\prime}u_{i}\rbrace$ is a matching that is greater than $ M $.

Therefore we recolor at most one of the $ v_{i}$ and $u_{i} $ for each $i$, $ 1\leq i \leq \alpha^{\prime}(G) $, this completes the proof.

}\end{proof}


\section{Remarks}
\label{}

 In Lemma \ref{L.1} we proved if $T_{1}$ is an independent set for a graph $G$ then, there exists $T_{2}$ such that,
 $T_{2}$ is an independent dominating set for $T_{1}$ and
$ \vert T_{1} \cap T_{2}\vert \leq \frac{2\Delta(G) -\delta(G) }{2\Delta(G)} \vert T_{1}\vert$. Finding the optimal upper bound for $ \vert T_{1} \cap T_{2}\vert$ seems to be an intriguing open problem. Here, we ask the following question.

\begin{que}\label{Q1}
Suppose that $G$ is an $r$-regular graph with $ r\neq 0 $. If $T_{1}$ is an independent set, is there exist an independent dominating set $T_{2}$ for $T_{1}$ such that $  T_{1} \cap T_{2} =\emptyset $?
\end{que}

If Question \ref{Q1} is true, it is easy to see that, for every regular graph $G$, we have $\chi_{2}(G)-\chi(G) \leq 4$.


\bibliographystyle{plain}
\bibliography{Refff}

\end{document}